\newtheorem{lemma}{Lemma}[]
\newtheorem{proposition}{Proposition}[]
\newtheorem{theorem}{Theorem}[]
\newtheorem{corollary}{Corollary}[]
\theoremstyle{definition}
\newtheorem{definition}{Definition}[]
\theoremstyle{plain}
\newcommand{\diam}{\operatorname{diam}}
\newcommand{\dis}{\operatorname{dis}}
\newcommand{\dist}{\operatorname{d}}
\renewcommand{\:}{\colon}
\renewcommand{\ss}{\subset}
\newcommand{\N}{\mathbb{N}}
\newcommand{\R}{\mathbb{R}}
\newcommand{\Z}{\mathbb{Z}}
\def\cC{{\cal C}}
\title{Geodesic in the Gromov--Hausdorff class for which the real line is an interior point}
\author{Ivan N. Mikhailov}
\date{}
\begin{document}
\maketitle

\begin{abstract}
In this note we construct a geodesic line in the Gromov--Hausdorff class for which the real line with a natural metric is an interior point. 
\end{abstract}

%%%%%%%%%%%%%%%%%%%%%%%%%%%%%%

%%%%%%%%%%%%%%%%%%%%%%%%%%%%%%

\section{Introduction}

In~\cite{GromovEng} M.\,Gromov introduced moduli spaces of the class of all metric spaces at finite Gromov--Hausdorff distances from a given metric space. It was mentioned that such moduli spaces are always complete and contractible (\cite{GromovEng}[section $3.11_{\frac{1}{2}_+}$]). In~\cite{BogatyTuzhilin} the authors suggested to work with such moduli spaces (they were called \emph{clouds}) in the sense of NBG set theory to avoid arising set-theoretic issues. While the completeness of each cloud was verified in~\cite{BogatyTuzhilin}, the contractibility of each cloud remains an open question for a number of reasons. The main issue here is that a natural homothety-mapping that takes a metric space $(X, d_X)$ into $(X, \lambda d_X)$ for some $\lambda > 0$ and generates a contraction of a cloud of all bounded metric spaces if $\lambda\to 0$, does not behave so well in case of unbounded metric spaces. Firstly, in~\cite{BogatyTuzhilin} it was shown that there exist metric spaces such that $d_{GH}(X, \lambda X) = \infty$ for some $\lambda > 0$. The simplest one is a geometric progression $X = \{3^n\:n\in\N\}$ with a natural metric, for which $d_{GH}(X, 2X) = \infty$. Secondly, even for clouds that are invariant under multiplication on all positive numbers a homothety-mapping may not be continuous. In~\cite{nglitghclitcotrl} it was shown that $d_{GH}(\Z^n, \lambda \Z^n)\ge \frac{1}{2}$ for all $\lambda > 1$, $n\in\N$.

In this note we continue the investigation of the geometry of the Gromov--Hausdorff class. We focus on the other open problem of constructing geodesics in the Gromov--Hausdorff class. In~\cite{Vihrov} a special class of metric spaces in a so-called general position was constructed, such that it is dense in the Gromov--Hausdorff class, and every two metric spaces from these class can be connected with a linear geodesic. However, it is still not known whether every two metric spaces at finite Gromov--Hausdorff distance from each other can be joint with a geodesic in the Gromov--Hausdorff class. Some new examples of geodesics lying in the cloud of the real line appeared in~\cite{nglitghclitcotrl}, and~\cite{IMT} (we review both these constructions in Section~\ref{section: clouds}). In this note, we construct the new geodesic line, for which the real line is an interior point. The existence of such geodesic is impossible in the cloud of bounded metric spaces, because of the ultrametric inequality from the first statement of Theorem~\ref{thm: boundedcloud} below.

\subsection*{Acknowledgements}

The author is grateful to his scientific advisor A.\,A.\,Tuzhilin and A.\,O.\,Ivanov for the constant attention to the work. 

Also the author is grateful to all the participants of the seminar <<Theory of extremal networks>>, leaded by A.\,O.\,Ivanov and A.\,A.\,Tuzhilin in Lomonosov Moscow State University for numerous fruitful discussions.

\section{Preliminaries}

\emph{A metric space} is an arbitrary pair $(X,\,d_X)$, where $X$ is an arbitrary set, $d_X\: X\times X\to [0,\,\infty)$ is some metric on it, that is, a nonnegative symmetric, positively definite function that satisfies the triangle inequality. 

For convenience, if it is clear in which metric space we are working, we denote the distance between points $x$ and~$y$ by~$|xy|$. Suppose $X$ is a metric space. By $U_r(a) =\{x\in X\colon |ax|<r\}$, $B_r(a) = \{x\in X\colon |ax|\le r\}$ we denote open and closed balls centered at the point~$a$ of the radius~$r$ in~$X$. For an arbitrary subset $A\subset X$ of a metric space~$X$, let $U_r(A) = \cup_{a\in A} U_r(a)$ be the open $r$-neighborhood of~$A$. For non-empty subsets $A \ss X$, $B \ss X$ we put $\dist(A,\,B)=\inf\bigl\{|ab|:\,a\in A,\,b\in B\bigl\}$.

\subsection{Hausdorff and Gromov--Hausdorff distances}

\begin{definition}
Let $A$ and $B$ be non-empty subsets of a metric space.
\emph{The Hausdorff distance} between $A$ and~$B$ is the quantity $$d_H(A,\,B) = \inf\bigl\{r > 0\colon A\subset U_r(B),\,B\subset U_r(A)\bigr\}.$$
\end{definition}

It is well-known that $d_H$ is a generalized pseudometric on the space~$\cC(X)$ of all non-empty closed subsets of a metric space~$X$. The wold <<generalized>> means that $d_H$ can be infinite between some pairs of closed subsets (for example, between a bounded and unbounded subsets).

\begin{definition} Let $X$ and $Y$ be metric spaces. The triple $(X', Y', Z)$, consisting of a metric space $Z$ and its two subsets $X'$ and $Y'$, isometric to $X$ and~$Y$, respectively, is called \emph{a realization of the pair} $(X, Y)$.
\end{definition}

\begin{definition} \emph{The Gromov-Hausdorff distance} $d_{GH} (X, Y)$ between $X$ and~$Y$ is the exact lower bound of the numbers $r\ge 0$ for which there exists a realization $(X', Y', Z)$ of the pair $(X, Y)$ such that $d_H(X',\,Y') \le r$. 
\end{definition}

Now let $X,\,Y$ be non-empty sets.  

\begin{definition} Each $\sigma\subset X\times Y$ is called a \textit{relation} between $X$ and~$Y$.
\end{definition}

By $\mathcal{P}_0(X,\,Y)$ we denote the set of all non-empty relations between $X$ and~$Y$.

We put $$\pi_X\colon X\times Y\rightarrow X,\;\pi_X(x,\,y) = x,$$ $$\pi_Y\colon X\times Y\rightarrow Y,\;\pi_Y(x,\,y) = y.$$ 

\begin{definition} A relation $R\subset X\times Y$ is called a \textit{correspondence}, if their restrictions $\pi_X|_R$ and $\pi_Y|_R$ are surjective.
\end{definition}

Let $\mathcal{R}(X,\,Y)$ be the set of all correspondences between $X$ and~$Y$.

\begin{definition} Let $X,\,Y$ be metric spaces, $\sigma \in \mathcal{P}_0(X,\,Y)$. The \textit{distortion} of $\sigma$ is the quantity $$\dis \sigma = \sup\Bigl\{\bigl||xx'|-|yy'|\bigr|\colon(x,\,y),\,(x',\,y')\in\sigma\Bigr\}.$$
\end{definition}

\begin{proposition}[\cite{BBI}]  \label{proposition: distGHformula}
For arbitrary metric spaces $X$ and~$Y$, the following equality holds $$2d_{GH}(X,\,Y) = \inf\bigl\{\dis\,R\colon R\in\mathcal{R}(X,\,Y)\bigr\}.$$
\end{proposition}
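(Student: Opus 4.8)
The plan is to derive the identity from the two one-sided estimates $2d_{GH}(X,Y)\le \inf\{\dis R: R\in\mathcal{R}(X,Y)\}$ and $2d_{GH}(X,Y)\ge \inf\{\dis R: R\in\mathcal{R}(X,Y)\}$. Since this is a standard fact (cf.~\cite{BBI}), I only sketch the constructions.

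For the first inequality I would fix a correspondence $R\in\mathcal{R}(X,Y)$ with $\dis R=2r<\infty$ (if no correspondence has finite distortion there is nothing to prove) and glue $X$ and $Y$ along $R$. Concretely, on the disjoint union $Z=X\sqcup Y$ define a function $d_Z$ equal to $d_X$ on $X\times X$, equal to $d_Y$ on $Y\times Y$, and, for $x\in X$, $y\in Y$,
$$
d_Z(x,y)=\inf\bigl\{\,|xx'|_X+r+|y'y|_Y:(x',y')\in R\,\bigr\}.
$$
The steps would then be: (a) check that $d_Z$ is a pseudometric --- symmetry is immediate, and the triangle inequality splits into the cases ``two points in $X$ and one in $Y$'' and the mirror case, each reducing to the triangle inequalities in $X$ and $Y$ together with the bound $\bigl||x'x''|_X-|y'y''|_Y\bigr|\le\dis R$; (b) pass to the metric quotient of $(Z,d_Z)$ and note that, since $d_Z$ restricts to the original metrics on $X$ and on $Y$, this quotient contains isometric copies $X'$, $Y'$ and is therefore a realization of the pair $(X,Y)$; (c) use surjectivity of $\pi_X|_R$ and $\pi_Y|_R$ to see that each point of one copy lies within $r$ of the other, so $d_H(X',Y')\le r$ and hence $d_{GH}(X,Y)\le r=\tfrac12\dis R$. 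Taking the infimum over $R$ gives the bound.

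For the reverse inequality I would start from an arbitrary realization $(X',Y',Z)$ of $(X,Y)$, identify $X$ with $X'$ and $Y$ with $Y'$, and assume $d_H(X',Y')<r$. Picking $r'$ with $d_H(X',Y')<r'<r$, set $R=\{(x,y)\in X\times Y:|xy|_Z\le r'\}$. Then $X'\subset U_{r'}(Y')$ and $Y'\subset U_{r'}(X')$ make both projections of $R$ surjective, so $R\in\mathcal{R}(X,Y)$, and for $(x,y),(x',y')\in R$ two applications of the triangle inequality in $Z$ yield $\bigl||xx'|_X-|yy'|_Y\bigr|=\bigl||xx'|_Z-|yy'|_Z\bigr|\le|xy|_Z+|x'y'|_Z\le 2r'$, so $\dis R\le 2r'<2r$. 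Since this works for every $r>d_{GH}(X,Y)$, we get $\inf_R\dis R\le 2d_{GH}(X,Y)$, and combining the two inequalities proves the proposition.

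The only step requiring real care is (a): one must verify that the glued function does not create a shortcut, i.e.\ that a path from $x\in X$ to $x''\in X$ passing through $Y$ is never shorter than $|xx''|_X$ (and symmetrically). This is exactly where the hypothesis $\dis R=2r$ is used, and it is the computation I would write out in full; the remaining verifications are routine manipulations of infima and of the open/closed neighborhoods in the definition of $d_H$.
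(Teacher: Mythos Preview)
The paper does not give its own proof of this proposition: it is stated with a citation to~\cite{BBI} and used as a black box. Your sketch is exactly the standard argument one finds in that reference (build a metric on $X\sqcup Y$ from a correspondence to get one inequality, and read off a correspondence from a realization to get the other), and the steps you outline are correct.
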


\subsection{Clouds}\label{section: clouds}

By~$\mathcal{VGH}$ we denote the class of all non-empty metric spaces, equipped with the Gromov--Hausdorff distance.

Note that~$\mathcal{VGH}$ is a proper class in the sense of the NBG set theory. In this theory all objects are \emph{classes} of one of the two following types: \emph{sets}, or \emph{proper classes}. A class is called a set if it belongs to some other class, and a proper class otherwise. It is important for us that for all classes the following natural constructions are defined: Cartesian product, maps between classes, metrics, pseudometrics, etc.  

\begin{theorem}[\cite{BBI}]
The Gromov--Hausdorff distance is a generalised pseudometric on~$\mathcal{VGH}$, vanishing on pairs of isometric metric spaces. Namely, the Gromov--Hausdorff distance is symmetric, satisfies a triangle inequality, though can vanish or be infinite between some pairs of non-isometric metric spaces.
\end{theorem}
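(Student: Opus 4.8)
The plan is to push everything through the distortion formula of Proposition~\ref{proposition: distGHformula}, which replaces the ambient-space definition of $d_{GH}$ by the combinatorial notion of a correspondence. Symmetry and non-negativity are immediate: a realization $(X',Y',Z)$ of $(X,Y)$ is simultaneously a realization of $(Y,X)$ with the same Hausdorff distance, and $d_H\ge 0$ always; equivalently, transposing a correspondence $R\ss X\x Y$ to $\{(y,x):(x,y)\in R\}\ss Y\x X$ preserves distortion. So the only substantive metric axiom is the triangle inequality.

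For the triangle inequality, fix metric spaces $X,Y,Z$ and arbitrary correspondences $R_1\in\cR(X,Y)$, $R_2\in\cR(Y,Z)$. I would form the composition
$$R_2\c R_1=\bigl\{(x,z)\in X\x Z\ :\ \exists\,y\in Y\ \text{such that}\ (x,y)\in R_1\ \text{and}\ (y,z)\in R_2\bigr\},$$
and verify two things. First, $R_2\c R_1$ is again a correspondence: given $x\in X$, surjectivity of $\pi_X|_{R_1}$ yields $y$ with $(x,y)\in R_1$ and then surjectivity of $\pi_Y|_{R_2}$ yields $z$ with $(y,z)\in R_2$, so $x\in\pi_X(R_2\c R_1)$; surjectivity of the $Z$-projection is symmetric. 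Second, for $(x,z),(x',z')\in R_2\c R_1$, choosing witnesses $y,y'\in Y$ and applying the triangle inequality in $\R$,
$$\bigl||xx'|-|zz'|\bigr|\ \le\ \bigl||xx'|-|yy'|\bigr|+\bigl||yy'|-|zz'|\bigr|\ \le\ \dis R_1+\dis R_2,$$
whence $\dis(R_2\c R_1)\le\dis R_1+\dis R_2$. Consequently $2d_{GH}(X,Z)\le\dis R_1+\dis R_2$, and taking the infimum over $R_1\in\cR(X,Y)$ and over $R_2\in\cR(Y,Z)$ separately gives $2d_{GH}(X,Z)\le 2d_{GH}(X,Y)+2d_{GH}(Y,Z)$. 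With the usual conventions for $+\infty$ this also covers the case of infinite distances.

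It remains to record the four degeneracy statements. If $f\:X\to Y$ is an isometry, its graph $R=\{(x,f(x)):x\in X\}$ is a correspondence of distortion $0$, so $d_{GH}(X,Y)=0$; thus $d_{GH}$ vanishes on isometric pairs. For a non-isometric pair at distance $0$, take $X=\mathbb{Q}$ and $Y=\R$ with the standard metrics: for each $\e>0$ the relation $R_\e=\{(q,x):|q-x|<\e\}$ is a correspondence (every real lies within $\e$ of a rational, and $(q,q)\in R_\e$ for every rational $q$) with $\dis R_\e\le 2\e$, so $d_{GH}(\mathbb{Q},\R)=0$ although $\mathbb{Q}$ and $\R$ are not isometric, e.g. because they have different cardinalities. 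For an infinite distance, let $X=\{p\}$ and $Y=\R$: the only correspondence between them is $\{p\}\x\R$, whose distortion is $\sup\{|zz'|:z,z'\in\R\}=\infty$, so $d_{GH}(\{p\},\R)=\infty$.

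The one genuinely substantive point is the distortion estimate for the composition of correspondences; everything else is either an unwinding of definitions or the display of an example. A minor point worth noting is set-theoretic: even though $\mathcal{VGH}$ is a proper class, all of the above manipulations happen inside fixed metric spaces $X,Y,Z$, which are sets, and $\cR(X,Y)$ is a subset of the power set of $X\x Y$, so the infima involved are infima of honest sets of extended reals and raise no foundational difficulty.
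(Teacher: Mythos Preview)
Your argument is correct and follows the standard route via the distortion formula; the composition-of-correspondences estimate is exactly the right engine for the triangle inequality, and your examples for degeneracy are clean.

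Note, however, that the paper does not supply a proof of this theorem at all: it is quoted as a known result from~\cite{BBI} and left unproved. So there is nothing in the paper to compare your argument against beyond the citation. Your proof is essentially the textbook one from~\cite{BBI}, which is presumably why the authors felt no need to reproduce it.
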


A class~$\mathcal{GH}_0$ is obtained from~$\mathcal{VGH}$ by factorization over zero distances, i.e., over an equivalence relation $X\sim_0 Y$, iff $d_{GH}(X,\,Y) = 0$.

\begin{definition}
Consider an equivalence relation~$\sim_1$ on~$\mathcal{GH}_0$: $X\sim_1 Y$, iff~$d_{GH}(X,\,Y) < \infty$. We call the corresponding equivalence classes \emph{clouds}.
\end{definition}

For an arbitrary metric space~$X$, we denote a cloud containing~$X$ by~$[X]$. Let~$\Delta_1$ be a metric space, consisting of a single point. Hence, $[\Delta_1]$ is the cloud of all bounded metric spaces. 

Suppose that for some metric spaces $A$ and~$A'$, the equality holds~$d_{GH}(A, A') = 0$. Then, for arbitrary metric space~$B$, we also have~$d_{GH}(A, B) = d_{GH}(A', B)$. From this simple observation, it follows that all the results about~$d_{GH}(A, B)$ also hold if we exchange $A$ by~$A'$ such that~$d_{GH}(A, A') = 0$. Thus, instead of interpreting <<$A\in [X]$>> directly by definition so that $A$ is an equivalence class of all metric spaces on zero Gromov--Hausdorff distances from each other, we will mean that~$A$ is a certain member of this equivalence class. For example,~$X\in[\Delta_1]$ can be read as <<$X$ is a bounded metric space>> throughout the paper.

\begin{theorem}[\cite{BBI}] \label{thm: boundedcloud}
Let $X$ and~$Y$ be arbitrary bounded metric spaces. Then
\begin{itemize}
\item The inequalities hold 
$$\frac{1}{2}\bigl|\diam X - \diam Y\bigr|\le d_{GH}(X, Y)\le \max\bigl\{d_{GH}(X, \Delta_1), d_{GH}(Y, \Delta_1)\bigr\} = \frac{1}{2}\max\bigl\{\diam X, \diam Y\bigr\}.$$
\item A map $\Phi\:[\Delta_1]\times \R_{\ge 0}\to [\Delta_1]$, $\Phi(X, \lambda) = \lambda X$ is continuous and generates a contraction of the cloud~$[\Delta_1]$ if~$\lambda \to 0$.
\item A curve~$\lambda X$, $\lambda\in[0, +\infty)$ is a geodesic with respect to the Gromov--Hausdorff distance in the cloud~$[\Delta_1]$.
\end{itemize}   
\end{theorem}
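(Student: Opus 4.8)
The plan is to reduce all three items to a couple of elementary facts about distortion. The first is that for a bounded space $X$ one has $d_{GH}(X,\Delta_1)=\tfrac12\diam X$: the only correspondence between $X$ and a one-point space is the total one $X\x\Delta_1$, and its distortion equals $\sup\{|xx'|\colon x,x'\in X\}=\diam X$, so Proposition~\ref{proposition: distGHformula} gives the claim. The second is that $\diam$ is $2$-Lipschitz with respect to $d_{GH}$: given any $R\in\cR(X,Y)$ and any near-diametric pair $x,x'\in X$, choosing $y,y'\in Y$ with $(x,y),(x',y')\in R$ yields $|yy'|\ge|xx'|-\dis R$, whence $\diam Y\ge\diam X-\dis R$; by symmetry $\dis R\ge\bigl|\diam X-\diam Y\bigr|$, and taking the infimum over $R$ gives $2d_{GH}(X,Y)\ge\bigl|\diam X-\diam Y\bigr|$. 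Finally, the distortion of the total correspondence $X\x Y$ is $\sup\bigl\{\bigl||xx'|-|yy'|\bigr|\bigr\}\le\max\{\diam X,\diam Y\}$ since both $|xx'|$ and $|yy'|$ lie in $[0,\max\{\diam X,\diam Y\}]$. Putting these three together proves the first item: the lower bound is the Lipschitz estimate, the middle inequality is the total-correspondence bound, and the concluding equality is the one-point computation.

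For the remaining items I would first record the scaling behaviour. For $\l>0$ every $R\in\cR(X,Y)$ has $\dis_{\l X,\l Y}R=\l\,\dis_{X,Y}R$, hence $d_{GH}(\l X,\l Y)=\l\,d_{GH}(X,Y)$; for $\l=0$ both sides vanish once the pseudometric space $0\cdot X$ is identified with $\Delta_1$. Combining the $2$-Lipschitz property with the diagonal correspondence $\{(x,x)\colon x\in X\}\in\cR(\l X,\mu X)$, whose distortion is $|\l-\mu|\diam X$, yields the exact formula
$$d_{GH}(\l X,\mu X)=\tfrac{|\l-\mu|}{2}\,\diam X.$$
Continuity of $\Phi$ at a point $(X,\l)$ now follows from the triangle inequality $d_{GH}(\l_nX_n,\l X)\le\l_n\,d_{GH}(X_n,X)+\tfrac12|\l_n-\l|\diam X$, whose right-hand side tends to $0$ as $X_n\to X$, $\l_n\to\l$, the factor $\l_n$ being bounded and $\diam X$ finite. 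The map is well defined on clouds because $d_{GH}(X,X')=0$ forces $d_{GH}(\l X,\l X')=0$, and it maps into $[\Delta_1]$ because $d_{GH}(\l X,\Delta_1)=\tfrac{\l}{2}\diam X<\infty$. The homotopy $H(X,t)=\Phi(X,1-t)$ is then continuous, equals the identity at $t=0$ and the constant map to the class of $\Delta_1$ at $t=1$, which is the required contraction.

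The third item is essentially the displayed formula: if $\diam X>0$, the reparametrisation $s=\tfrac{\l}{2}\diam X$ turns $\l\mapsto\l X$ into an isometric embedding of $[0,\infty)$ into $[\Delta_1]$, hence a geodesic ray (and every subsegment realises the distance between its endpoints); if $\diam X=0$ the curve is constant and there is nothing to prove.

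I do not expect a genuine obstacle here — the whole argument lives inside the distortion estimates above. The one point deserving care is the endpoint $\l=0$: the space $0\cdot X$ is only a pseudometric space, so one must work in the $\sim_0$-quotient to read ``$\Phi(X,0)=\Delta_1$'' correctly and to justify continuity there; relatedly, it is worth fixing from the outset that $[\Delta_1]$ is topologised by the genuine, finite-valued metric induced by $d_{GH}$, so that ``continuous'' and ``contraction'' carry their usual meaning.
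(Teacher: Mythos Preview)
The paper does not supply a proof of this theorem at all: it is quoted verbatim from \cite{BBI} and used as background, so there is no in-paper argument to compare against. Your proof is correct and is essentially the standard textbook route one finds in sources such as \cite{BBI}: the one-point computation $d_{GH}(X,\Delta_1)=\tfrac12\diam X$, the $2$-Lipschitz bound for $\diam$, the total-correspondence upper bound, and the exact scaling identity $d_{GH}(\l X,\mu X)=\tfrac{|\l-\mu|}{2}\diam X$ obtained by matching the diagonal upper bound with the diameter lower bound. The continuity/contraction and geodesic claims then follow from this identity exactly as you write, and your remark about reading $0\cdot X$ in the $\sim_0$-quotient is the right way to handle the endpoint.
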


By continuity of a map between clouds, we mean a continuity in a sense of a common definition of continuity in a point with respect to a metric. Namely, a map $f\: [A]\to  [B]$ is continuous in $x$ if, for arbitrary $\varepsilon > 0$, there exists $\delta > 0$ such that $f\bigl(U_\delta(x)\bigr)\subseteq U_\varepsilon(f(x))$, where $[A]$, and $[B]$ are equipped with the Gromov--Hausdorff distance.

We will also need two additional constructions of geodesics in the Gromov--Hausdorff class, lying in the cloud of the real line.

\begin{theorem}[\cite{nglitghclitcotrl}]\label{cor: simpleboundedgeodesicR}
For an arbitrary bounded metric space~$X$, a curve~$\R\times_{\ell^1}(tX)\: t\in[0,\,+\infty)$ is a geodesic in the Gromov--Hausdorff class such that $d_{GH}\bigl(\R\times_{\ell^1}(t_1X), \R\times_{\ell^2}(t_2X)\bigr) = \frac{|t_1-t_2|}{2}$.    
\end{theorem}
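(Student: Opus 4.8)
The plan is to establish the quantitative statement $d_{GH}\bigl(\R\times_{\ell^1}(t_1X),\,\R\times_{\ell^1}(t_2X)\bigr)=\tfrac12|t_1-t_2|\diam X$ (this is the displayed equality once $X$ is normalised so that $\diam X=1$), and then to read off the geodesic property from it. For the upper bound I would use the ``diagonal'' correspondence $R_0=\bigl\{\bigl((a,x),(a,x)\bigr):a\in\R,\ x\in X\bigr\}$, which is a bijective correspondence between the two spaces; since the distance between $(a,x)$ and $(a',x')$ is $|a-a'|+t_i\,d_X(x,x')$ in the $i$-th space, one gets $\dis R_0=\sup_{x,x'\in X}|t_1-t_2|\,d_X(x,x')=|t_1-t_2|\diam X$, so Proposition~\ref{proposition: distGHformula} gives $d_{GH}\le\tfrac12|t_1-t_2|\diam X$.

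The heart of the matter is the matching lower bound: assuming $t_1>t_2$, one must show $\dis R\ge(t_1-t_2)\diam X$ for every correspondence $R$ between the two spaces. My approach is to strip off the $\R$-factor and reduce to bounded spaces. Restricting $R$ to the slice $\Sigma=\{0\}\times t_1X$ (isometric to $t_1X$, of diameter $t_1\diam X$), its $R$-image $W\subseteq\R\times_{\ell^1}(t_2X)$ has diameter $\le t_1\diam X+\dis R$, so in particular the first coordinates occurring in $W$ span an interval whose length is controlled by $\dis R$; thus the otherwise unbounded $\R$-coordinate is pinned down. Next I would build from $R$ an honest correspondence $\sigma$ between $t_1X$ and $t_2X$: declaring $(x,y)\in\sigma$ whenever $\bigl((a,x),(c,y)\bigr)\in R$ for some $a,c$ is automatically surjective both ways, and one estimates $\dis\sigma$ by choosing, for a pair $(x,y),(x',y')\in\sigma$, the witnesses $a,c,a',c'$ so that the $|a-a'|$ and $|c-c'|$ contributions cancel, using the slice-diameter bound (and, if necessary, comparing images of several slices along $\R$ so as not to lose surjectivity onto any fibre). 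Once $\dis\sigma\le\dis R$ is secured, Theorem~\ref{thm: boundedcloud} yields $(t_1-t_2)\diam X=2\,d_{GH}(t_1X,t_2X)\le\dis\sigma\le\dis R$, finishing the lower bound. The step I expect to be the main obstacle is exactly this control of the $\R$-translations together with surjectivity: the $R$-image of one slice need not meet every fibre of the target, and it is precisely here that the sharp constant $\tfrac12$ is decided — a crude finite configuration of points in the two spaces only delivers a weaker constant, of order $\tfrac13$. A possible alternative is an isometry invariant $\Lambda(Z)=\sup\{d_H(L,L'):L,L'\text{ bi-infinite geodesics of }Z,\ d_H(L,L')<\infty\}$: every bi-infinite geodesic of $\R\times_{\ell^1}(tX)$ has the form $s\mapsto(\phi(s),\psi(s))$ with $\phi:\R\to\R$ $1$-Lipschitz, monotone and onto, hence lies within Hausdorff distance $t\diam X$ of each fibre $\R\times\{x\}$, which together with $d_H(\R\times\{x\},\R\times\{x'\})=t\,d_X(x,x')$ forces $\Lambda(\R\times_{\ell^1}(tX))=t\diam X$; the obstacle in that route is that a correspondence moves a geodesic line only to a quasi-geodesic line, so one must pass to a sufficiently robust quasi-geodesic version of $\Lambda$ without losing the constant.

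Granting the formula, the geodesic property is immediate: writing $\gamma(t)=\R\times_{\ell^1}(tX)$ for $t\ge0$, for any $t_1\le t_2\le t_3$ one has $d_{GH}(\gamma(t_1),\gamma(t_2))+d_{GH}(\gamma(t_2),\gamma(t_3))=\tfrac12\diam X\,(t_3-t_1)=d_{GH}(\gamma(t_1),\gamma(t_3))$, so $\gamma$, reparametrised to unit speed by the factor $\tfrac12\diam X$, realises distances additively on every subinterval; hence it is a geodesic ray in the Gromov--Hausdorff class along which $d_{GH}(\gamma(t_1),\gamma(t_2))=\tfrac12|t_1-t_2|\diam X$, which is the assertion.
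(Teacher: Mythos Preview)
The present paper does not prove this theorem: it is quoted from~\cite{nglitghclitcotrl} and merely invoked (the one-line proof of the lemma $d_{GH}(\R_{d_1},\R_{d_2})=\frac{|d_1-d_2|}{2}$ just cites it). So there is no in-paper argument to compare against; what follows is an assessment of your sketch on its own terms.

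Your upper bound via the diagonal correspondence $R_0$ is correct and yields $\dis R_0=|t_1-t_2|\diam X$; your remark that the displayed formula tacitly assumes $\diam X=1$ is also right. The lower bound, however, is not established, and you are candid about this. The correspondence $\sigma$ you build---$(x,y)\in\sigma$ whenever $\bigl((a,x),(c,y)\bigr)\in R$ for some $a,c$---is indeed a correspondence between $t_1X$ and $t_2X$, but the inequality $\dis\sigma\le\dis R$ that you need does not follow from what you write. From witnesses $(a,c)$ for $(x,y)$ and $(a',c')$ for $(x',y')$ one only gets
\[
\bigl|\,|a-a'|+t_1\,d_X(x,x')-|c-c'|-t_2\,d_X(y,y')\,\bigr|\le\dis R,
\]
and the witnesses are dictated by $R$, not chosen by you, so there is no mechanism forcing $|a-a'|=|c-c'|$. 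Concretely, $R$ could send all of $\{0\}\times t_1X$ into a small neighbourhood of a single fibre of the target while spreading the $\R$-coordinate by up to $t_1\diam X$; then $\sigma$ collapses and $\dis\sigma$ is of order $t_1\diam X$, far larger than $\dis R$. The vague appeal to ``comparing images of several slices along $\R$'' is where the real argument would have to live, and it is absent. Your alternative via the invariant $\Lambda(Z)=\sup\{d_H(L,L')<\infty\}$ over bi-infinite geodesics has the obstacle you yourself name: a correspondence of distortion $\eta$ sends a geodesic line only to an $\eta$-quasi-geodesic, and promoting $\Lambda$ to a quasi-geodesic version that is both $2$-Lipschitz in $d_{GH}$ and still equals $t\diam X$ on $\R\times_{\ell^1}(tX)$ is exactly the missing work. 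In short, both routes are reasonable starting heuristics, but neither is carried through; the lower bound is the entire content of the theorem, and for an actual proof one must consult~\cite{nglitghclitcotrl}.
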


Let $X$ be an arbitrary boundedly compact, geodesic metric space. Consider its non-empty closed subsets $A$ and~$B$ such that $d=d_H(A,B)<\infty$. For $t\in[0,d]\ss\R$, we put $C_t=B_t(A)\cap B_{d-t}(B)$. Each $C_t$ is closed in $X$. In other words, $C_t\in\cC(X)$ for all~$t$.

\begin{theorem}[\cite{IMT}]\label{thm:HausdorffGeodesic}
In the introduced terminology the sets~$C_t$ are non-empty, and a curve~$t\mapsto C_t$ is a shortest curve in~$\cC(X)$.
\end{theorem}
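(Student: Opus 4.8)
The plan is to establish the two assertions of Theorem~\ref{thm:HausdorffGeodesic} in sequence: first nonemptiness of every $C_t$, then the shortest-curve property. For nonemptiness, fix $t\in[0,d]$ and pick $a\in A$, $b\in B$. Since $X$ is geodesic, choose a shortest path $\gamma$ from $a$ to $b$; its length is at least $|ab|$ but may be much larger, so a single path will not suffice in general. Instead I would argue by a limiting/compactness procedure: for $\varepsilon>0$, the definition of $d_H(A,B)=d$ guarantees points $a\in A$, $b\in B$ with $|ab|\le d+\varepsilon$ that are ``almost aligned'' in the sense needed, and a point on a geodesic between them at arc-length $t$ from $a$ lies in $B_{t+\varepsilon}(A)\cap B_{d-t+\varepsilon}(B)$. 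Letting $\varepsilon\to 0$ and using boundedly compactness (the relevant points stay in a fixed ball around $a$, hence in a compact set) extract a convergent subsequence whose limit lies in $C_t=B_t(A)\cap B_{d-t}(B)$; here closedness of $A$, $B$ and of closed balls is what makes the limit land in the intersection. One must check the neighbourhood inclusions $A\subset U_{d}(B)$-type statements are used in the right direction so that both constraints $|xA|\le t$ and $|xB|\le d-t$ can be met simultaneously — this is the first place care is needed.

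For the shortest-curve property, the natural strategy is to bound $d_H(C_s,C_t)$ above by $|s-t|$ and separately to bound the total length of $t\mapsto C_t$ below by $d=d_H(C_0,C_t)|_{t=d}=d_H(A,B)$, noting $C_0=B_0(A)\cap B_d(B)$ contains $A$ (indeed equals $A\cap B_d(B)=A$ since $A\subset B_d(B)$) and similarly $C_d=B$. The lower bound is then immediate from the triangle inequality for $d_H$ once the upper Lipschitz bound is in hand, because any curve joining $A$ to $B$ in $\cC(X)$ has length at least $d_H(A,B)=d$, while the curve $C_t$ has length at most $\int_0^d 1\,dt=d$. So everything reduces to the Lipschitz estimate $d_H(C_s,C_t)\le|s-t|$. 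To prove it, take $s<t$ and $x\in C_s$, so $|xA|\le s$ and $|xB|\le d-s$. I want a point $x'\in C_t$ with $|xx'|\le t-s$. Pick $a\in A$ with $|xa|$ close to $|xA|\le s$; moving along a geodesic from $x$ toward $a$ a distance $t-s$ (or all the way to $a$ if $|xa|<t-s$) produces a point $x'$ with $|x'A|\le\max(0,|xa|-(t-s))\le \max(0,s-(t-s))\le t$ — wait, more carefully $|x'A|\le|xA|$ stays $\le s\le t$ trivially if we move toward $A$, and $|x'B|\le|xB|+|xx'|\le(d-s)+(t-s)$, which is not obviously $\le d-t$. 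This shows moving blindly toward $A$ is wrong; one must move so as to \emph{decrease} the $B$-distance while not increasing the $A$-distance too much. The correct choice: since $|xB|\le d-s$ and we need $|x'B|\le d-t$, we should move distance $t-s$ along a geodesic from $x$ toward a nearly-closest point of $B$, giving $|x'B|\le(d-s)-(t-s)=d-t$ and $|x'A|\le|xA|+(t-s)\le s+(t-s)=t$. Symmetrically, for $x\in C_t$ we move toward $A$. So the Hausdorff bound follows from the geodesic hypothesis plus these two one-sided triangle-inequality estimates; the $\varepsilon$ of ``nearly closest'' is absorbed by a limiting argument as before.

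The main obstacle I anticipate is the nonemptiness of $C_t$: unlike the Lipschitz estimate, which is a clean geodesic-space manipulation, nonemptiness genuinely uses boundedly compactness and the precise meaning of $d_H(A,B)=d$ (in particular that $d$ is achieved as a supremum of one-sided distances, which need not be attained, forcing the $\varepsilon$-approximation and compactness extraction). A secondary subtlety is verifying $C_0=A$ and $C_d=B$ and checking that the curve is continuous (indeed $1$-Lipschitz) at the endpoints, so that its length is genuinely $d$ and the shortest-curve conclusion is not vacuous. Once nonemptiness and the $1$-Lipschitz property are secured, the shortest-curve claim is a one-line consequence of the triangle inequality in the pseudometric space $\bigl(\cC(X),d_H\bigr)$.
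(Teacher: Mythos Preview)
The paper does not prove Theorem~\ref{thm:HausdorffGeodesic}; it is quoted from \cite{IMT} and used only through Corollary~\ref{cor:Rgeodesic}. So there is no in-paper argument to compare your proposal against.

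That said, your sketch is essentially the standard proof and is correct once the self-corrections you made are incorporated. Two remarks that would tighten it. First, in a boundedly compact space the distance from a point to a nonempty closed set is always attained, so the ``nearly closest point plus $\varepsilon$-limit'' manoeuvre in the Lipschitz step is unnecessary: for $x\in C_s$ pick $b\in B$ with $|xb|=\dist(x,B)\le d-s$, walk along a geodesic toward $b$ a distance $\min(t-s,|xb|)$, and land exactly in $C_t$ at distance $\le t-s$. Second, for nonemptiness you can likewise fix a single $a_0\in A$, take $b_\varepsilon\in B$ with $|a_0b_\varepsilon|<d+\varepsilon$, and choose $x_\varepsilon$ on a geodesic from $a_0$ to $b_\varepsilon$ at distance $\min(t,|a_0b_\varepsilon|)$; all $x_\varepsilon$ lie in the compact ball $B_t(a_0)$, and any subsequential limit is in $C_t$ by continuity of $\dist(\cdot,A)$ and $\dist(\cdot,B)$. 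The endpoint identifications $C_0=A$, $C_d=B$ follow exactly as you indicate from $A\subset B_d(B)$ and $B\subset B_d(A)$, which hold because $d_H(A,B)=d$ gives $\dist(a,B)\le d$ for every $a\in A$ and symmetrically.
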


A curve from Theorem~\ref{thm:HausdorffGeodesic} is called a \emph{canonical Hausdorff geodesic}.

Particularly, we need the following result

\begin{corollary}[\cite{IMT}, Corollary~$9.3$]\label{cor:Rgeodesic}
Let $X\ss\R$ be an arbitrary closed subset such that~$d_{GH}(X,\R)<\infty$. Then a canonical Hausdorff geodesic, connecting $X$ and~$\R$ is a shortest geodesic in the Gromov--Hausdorff class.
\end{corollary}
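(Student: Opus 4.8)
The plan is to identify the canonical Hausdorff geodesic explicitly, bound its $d_{GH}$-length from above by its Hausdorff length via the elementary inequality $d_{GH}\le d_H$, and then match this with a lower bound for $d_{GH}(X,\R)$ coming from a chain-connectivity invariant. First I would set $d=d_H(X,\R)$ (finite, since the canonical Hausdorff geodesic is assumed to exist) and observe that in our situation $B_{d-t}(\R)=\R$ for every $t\in[0,d]$, so the sets of Theorem~\ref{thm:HausdorffGeodesic} simplify to the closed neighbourhoods $C_t=B_t(X)=\{y\in\R:\dist(\{y\},X)\le t\}$; in particular $C_0=X$ and, because $d=\sup_{y\in\R}\dist(\{y\},X)$ forces $B_d(X)=\R$, we get $C_d=\R$. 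By Theorem~\ref{thm:HausdorffGeodesic} the curve $t\mapsto C_t$ is parametrised by arc length in $\cC(\R)$, i.e. $d_H(C_{t_1},C_{t_2})=|t_1-t_2|$. Since for subsets of a common metric space the identity realization gives $d_{GH}\le d_H$, summing over a partition yields that the $d_{GH}$-length of the curve is at most $d$, and $d_{GH}(C_{t_1},C_{t_2})\le|t_1-t_2|$ for all $t_1,t_2$.

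The heart of the argument is the matching lower bound $d_{GH}(X,\R)\ge d$. Here I would use chain connectivity: call points $p,q$ of a metric space $s$-connected if they are joined by a finite chain with consecutive distances $\le s$. Fix $\e>0$; since $d=\sup_{y}\dist(\{y\},X)$, there is a gap $(a,b)$ of $X$ (with $a,b\in X$ and $(a,b)\cap X=\0$) of length $b-a>2(d-\e)$, and any chain in $X$ joining $a$ to $b$ must, by a one-dimensional intermediate-value argument, contain a single step of size $\ge b-a$. Now take any correspondence $R\in\cR(X,\R)$ and any $\delta>0$: choosing reals $\alpha,\beta$ with $(a,\alpha),(b,\beta)\in R$, joining them by a $\delta$-chain in $\R$, and pulling each chain vertex back through $R$ (using surjectivity of $\pi_\R|_R$), produces a chain in $X$ from $a$ to $b$ all of whose steps are at most $\delta+\dis R$. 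Comparing with the previous step forces $\delta+\dis R\ge b-a>2(d-\e)$; letting $\delta\to0$ and then $\e\to0$ gives $\dis R\ge 2d$ for every $R$, whence $d_{GH}(X,\R)\ge d$ by Proposition~\ref{proposition: distGHformula}. Together with the upper bound this proves $d_{GH}(X,\R)=d=d_H(X,\R)$.

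Finally I would close the argument by a triangle-inequality squeeze: for any $0\le t_1\le t_2\le d$,
$$d=d_{GH}(C_0,C_d)\le d_{GH}(C_0,C_{t_1})+d_{GH}(C_{t_1},C_{t_2})+d_{GH}(C_{t_2},C_d)\le t_1+(t_2-t_1)+(d-t_2)=d,$$
so every inequality is an equality and $d_{GH}(C_{t_1},C_{t_2})=|t_1-t_2|$. Thus $t\mapsto C_t$ is an isometric embedding of $[0,d]$ into the Gromov--Hausdorff class whose length equals $d_{GH}(X,\R)$, i.e. a shortest geodesic, as claimed.

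The main obstacle is the lower bound of the second paragraph: the upper bound and the final squeeze are formal, whereas establishing $d_{GH}(X,\R)\ge d$ is where the specific geometry of $\R$ (connectivity versus the large gap of $X$) must be exploited. The chain-pullback through an arbitrary correspondence is the one genuinely delicate point, and some care is needed because the extremal gap length $2d$ may only be approached and not attained, which is why the argument is run with an auxiliary $\e>0$.
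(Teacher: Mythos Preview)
The paper does not supply a proof of this corollary; it is quoted verbatim from~\cite{IMT} as an external input and used as a black box (for instance in the one-line proof that $d_{GH}(\Z_{t_1},\Z_{t_2})=|t_1-t_2|$). So there is no in-paper argument to compare against.

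Your proposed proof is correct. The upper bound via $d_{GH}\le d_H$ and the final triangle-inequality squeeze are exactly the right formal wrappers, and the chain-pullback argument for the lower bound $d_{GH}(X,\R)\ge d$ is sound: the gap $(a,b)$ forces every $X$-chain from $a$ to $b$ to contain a step of length at least $b-a$, and pulling a $\delta$-chain in $\R$ back through an arbitrary correspondence produces such a chain with steps bounded by $\delta+\dis R$. One minor point deserves a sentence: the corollary's hypothesis is $d_{GH}(X,\R)<\infty$, not that the canonical Hausdorff geodesic exists, so you should first deduce $d_H(X,\R)<\infty$ rather than assume it. Your own argument gives this immediately: if $X$ were bounded on one side, or had gaps of unbounded length, the same chain-pullback would force $\dis R=\infty$ for every correspondence, contradicting $d_{GH}(X,\R)<\infty$. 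With that remark in place, your identification $C_t=B_t(X)$ and the rest of the proof go through without change.
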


\section{Main theorem}

Let $X$ be an arbitrary bounded path-connected metric space of diameter~$1$. Fix $0 < \delta < \frac{1}{2}$. We put
\begin{align*}
&\Z_t = \cup_{n\in\Z}[n-t, n+t]\subset \R, \;t\in \Bigl[\frac{1}{2}-\delta, \frac{1}{2}\Bigr],\\
& \R_d = \R\times_{\ell^1}(dX),\;d\in [0, \delta],
\end{align*}
where $X\times_{\ell^1}Y$ is the Cartesian product $X\times Y$ equipped with the $\ell^1$-metric:
$$d_{X\times_{\ell^1}Y}\bigl((x, y), (x', y')\bigr) = d_X(x, x') + d_Y(y, y').$$

\begin{theorem}
By gluing $($and reparametrizing$)$ $\Z_t$, $t\in [\frac{1}{2}-\delta, \frac{1}{2}]$ and $\R_d$, $d\in [0, \delta]$, we obtain a shortest curve in the Gromov--Hausdorff class, for which $\R$ is an interior point.  
\end{theorem}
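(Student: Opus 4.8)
The plan is to build the curve as a concatenation of two already-understood pieces and then verify that the concatenation is globally shortest by a distance-additivity argument. For the first piece, $d\mapsto \R_d=\R\times_{\ell^1}(dX)$ with $d\in[0,\delta]$: since $X$ is a bounded path-connected (hence, for our purposes, we may treat it as a space of diameter $1$), Theorem~\ref{cor: simpleboundedgeodesicR} gives that this is a geodesic with $d_{GH}(\R_{d_1},\R_{d_2})=\tfrac{|d_1-d_2|}{2}$; in particular the endpoint $\R_0=\R$ and $d_{GH}(\R,\R_\delta)=\tfrac{\delta}{2}$. For the second piece, $t\mapsto \Z_t=\bigcup_{n\in\Z}[n-t,n+t]$ with $t\in[\tfrac12-\delta,\tfrac12]$: each $\Z_t$ is a closed subset of $\R$ at finite Gromov--Hausdorff distance from $\R$ (indeed $d_H(\Z_t,\R)=\tfrac12-t\le\delta$ inside $\R$), so by Corollary~\ref{cor:Rgeodesic} the canonical Hausdorff geodesic connecting $\Z_t$ and $\R$ is a shortest curve in the Gromov--Hausdorff class. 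Moreover $\Z_{1/2}=\R$, and one computes $d_{GH}(\Z_t,\Z_{t'})$ along this family; the natural reparametrization makes $t\mapsto\Z_t$ a unit-speed-up-to-scaling shortest curve from $\R=\Z_{1/2}$ to $\Z_{1/2-\delta}$ of total length $\tfrac{\delta}{2}$ as well. Thus each half, taken on its own, is a shortest curve emanating from $\R$.

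The key point — and the reason $\R$ ends up being an \emph{interior} point rather than an endpoint — is that these two shortest curves leave $\R$ in ``opposite directions'' in the sense that their lengths add: I would show
\[
d_{GH}\bigl(\Z_{1/2-\delta},\,\R_\delta\bigr)=d_{GH}\bigl(\Z_{1/2-\delta},\,\R\bigr)+d_{GH}\bigl(\R,\,\R_\delta\bigr)=\tfrac{\delta}{2}+\tfrac{\delta}{2}=\delta .
\]
The upper bound $\le\delta$ is the triangle inequality. For the lower bound $\ge\delta$ I would produce an explicit correspondence-free obstruction: take any correspondence $R$ between $\Z_{1/2-\delta}$ and $\R_\delta=\R\times_{\ell^1}(\delta X)$ and show $\dis R\ge 2\delta$. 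The idea is that $\Z_{1/2-\delta}$ has ``gaps'' of width $2\delta$ (the complementary intervals $(n+\tfrac12-\delta,\,n+1-\tfrac12+\delta)$ have length $2\delta$) — so along the $\R$-direction it is ``$\delta$-coarser'' than $\R$ — while $\R_\delta$ carries an extra $dX$ factor that is ``$\delta$-wider'' than $\R$ in a transverse direction; a point of $\R_\delta$ of the form $(r,x)$ with $x$ an endpoint of a diameter of $\delta X$ cannot be matched consistently with the lattice-of-intervals structure of $\Z_{1/2-\delta}$ without incurring distortion $2\delta$. Concretely I expect to pick two pairs of points realizing, on one side, a near-gap configuration and, on the other, a near-transverse-diameter configuration, and read off $\bigl||xx'|-|yy'|\bigr|\ge 2\delta$. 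This is the step I expect to be the main obstacle, since it requires a genuinely two-sided lower bound rather than invoking a black box; the auxiliary computations of $d_{GH}(\Z_t,\Z_{t'})$ and of the canonical Hausdorff geodesic between $\Z_t$ and $\R$ are also needed but are routine given Theorem~\ref{thm:HausdorffGeodesic} and Corollary~\ref{cor:Rgeodesic}.

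Granting the additivity identity, the conclusion follows from a standard fact about (generalized pseudo)metric spaces: if $\gamma_1$ is a shortest curve from $p$ to $q$ and $\gamma_2$ a shortest curve from $q$ to $r$ with $d(p,q)+d(q,r)=d(p,r)$, then the concatenation $\gamma_1*\gamma_2$, suitably reparametrized, is a shortest curve from $p$ to $r$ passing through the interior point $q$. Here $p=\Z_{1/2-\delta}$, $q=\R$, $r=\R_\delta$; after reparametrizing the $\Z_t$-piece (running $t$ from $\tfrac12-\delta$ up to $\tfrac12$) and then the $\R_d$-piece (running $d$ from $0$ up to $\delta$) proportionally to arclength, we get a shortest curve of total length $\delta$ on which $\R$ is attained at an interior parameter value. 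I would close by remarking that, as noted in the introduction, such a configuration is impossible inside the cloud $[\Delta_1]$ of bounded spaces because of the ultrametric-type inequality in the first item of Theorem~\ref{thm: boundedcloud}, so the unboundedness of $\R$ is essential.
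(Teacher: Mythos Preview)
Your overall architecture matches the paper's: reduce to the two known geodesic pieces via Theorem~\ref{cor: simpleboundedgeodesicR} and Corollary~\ref{cor:Rgeodesic}, then establish additivity of the endpoint distances so that the concatenation is globally shortest with $\R$ in the interior. That reduction is correct.

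There is a numerical slip. Along the family $\Z_t$ the canonical Hausdorff geodesic gives $d_{GH}(\Z_{t_1},\Z_{t_2})=|t_1-t_2|$ (the paper records this as a lemma), so the $\Z_t$ arc from $t=\tfrac12-\delta$ to $t=\tfrac12$ has length $\delta$, not $\tfrac{\delta}{2}$. Consequently the additivity you must prove is $d_{GH}(\Z_{1/2-\delta},\R_\delta)=\tfrac{3\delta}{2}$, i.e.\ $\dis R\ge 3\delta$ for every correspondence, not $2\delta$. The paper in fact proves the stronger formula $d_{GH}(\Z_t,\R_d)=\tfrac{d}{2}+\tfrac12-t$ for all admissible $t,d$.

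The substantive gap is your lower-bound sketch. ``Pick two pairs of points and read off $\bigl||xx'|-|yy'|\bigr|\ge 2\delta$'' will not work: for any finite configuration a correspondence can be tailored to have small distortion there, so the bound must be global. The paper's argument is considerably more elaborate and uses the hypothesis that $X$ is \emph{path-connected} in an essential way (you never invoke it). It builds a graph $G$ on the intervals $I_n=[n-t,n+t]$, with $v_n\sim v_m$ iff the images $A_n=R(I_n)$, $A_m=R(I_m)$ touch; shows that adjacent vertices satisfy $|n-m|\le 1$; uses a path in $\R_d$ (available because $dX$ is path-connected) to prove $G$ is connected, hence a linear graph; then proves a topological lemma that every $A_n$ meets every horizontal slice $\R\times\{x\}$. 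Only after this does the distortion bound emerge, and not from a single pair: one picks $x,x'\in dX$ nearly realizing the diameter, estimates the horizontal extent of each $A_n$ on both slices, sums these over $n=2,\dots,N-1$ to bound the length of two long horizontal segments from above, and compares with a lower bound coming from the metric of $\Z_t$; letting $N\to\infty$ forces a contradiction with $\dis R<d+1-2t$. None of this structure is hinted at in your proposal, and the asymptotic-in-$N$ comparison is precisely the mechanism that converts the ``gap width'' and ``transverse diameter'' intuitions you mention into an actual inequality.
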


\begin{lemma}
The equality holds $d_{GH}(\R_{d_1}, \R_{d_2}) = \frac{|d_1-d_2|}{2}.$
\end{lemma}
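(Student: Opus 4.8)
The plan is to prove the two matching inequalities. For the upper bound I would use the ``diagonal'' correspondence: the spaces $\R_{d_1}$ and $\R_{d_2}$ share the same underlying set $\R\times X$, only the metric on the second factor being rescaled, so the relation $R=\bigl\{\,\bigl((s,x),(s,x)\bigr)\colon (s,x)\in\R\times X\,\bigr\}$ is a correspondence, and for any two of its pairs
\[
\Bigl|d_{\R_{d_1}}\bigl((s,x),(s',x')\bigr)-d_{\R_{d_2}}\bigl((s,x),(s',x')\bigr)\Bigr|=|d_1-d_2|\,d_X(x,x')\le |d_1-d_2|\diam X=|d_1-d_2|,
\]
where the last step uses the normalization $\diam X=1$. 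Hence $\dis R\le|d_1-d_2|$, and Proposition~\ref{proposition: distGHformula} gives $d_{GH}(\R_{d_1},\R_{d_2})\le\tfrac{|d_1-d_2|}{2}$.

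For the lower bound I would appeal to Theorem~\ref{cor: simpleboundedgeodesicR}. By definition $\R_d=\R\times_{\ell^1}(dX)$ is exactly the space occurring there with parameter $t=d$ and bounded space $X$; in particular, taking $t=0$ yields $\R\times_{\ell^1}(0\cdot X)=\R$, so that theorem gives $d_{GH}(\R_d,\R)=\tfrac d2$ for every $d\in[0,\delta]$ (again it is the normalization $\diam X=1$ that produces the constant $\tfrac12$). The triangle inequality for $d_{GH}$ then gives
\[
d_{GH}(\R_{d_1},\R_{d_2})\ge\bigl|d_{GH}(\R_{d_1},\R)-d_{GH}(\R_{d_2},\R)\bigr|=\frac{|d_1-d_2|}{2},
\]
which combined with the upper bound proves the equality. (One could instead apply the full strength of Theorem~\ref{cor: simpleboundedgeodesicR} directly with $t_1=d_1$, $t_2=d_2$ and be done immediately; I prefer to isolate the elementary upper bound so that the only input borrowed from the literature is the single value $d_{GH}(\R_d,\R)=d/2$.)

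The one genuinely non-elementary point is precisely this lower bound, i.e.\ the estimate $d_{GH}(\R_d,\R)\ge d/2$, and that is where I expect the main obstacle to lie. Naive distortion bookkeeping does not seem to close the gap: feeding a few well-chosen points of $\R_d$ into an arbitrary correspondence --- say $(0,p)$ and $(0,q)$ with $d_X(p,q)$ close to $\diam X$, together with some far-away points on the same ``columns'' --- only pins the relevant real-line distances to an \emph{interval} of admissible values rather than forcing a contradiction, because the triangle inequality on $\R$ alone is too weak. This is exactly the phenomenon that Theorem~\ref{cor: simpleboundedgeodesicR} resolves, by controlling the ``transverse width'' of $\R\times_{\ell^1}(dX)$ globally instead of point by point; granting that result, everything else in the lemma reduces to the routine verifications above.
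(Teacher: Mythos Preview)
Your proposal is correct and is essentially the paper's own approach: the paper's proof is the one-line ``It follows from Theorem~\ref{cor: simpleboundedgeodesicR}'', i.e.\ exactly the direct application you mention in your parenthetical remark. Your added diagonal-correspondence upper bound is a harmless elaboration, but the substantive content in both arguments is the same cited theorem supplying the lower bound (and indeed the full equality).
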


\begin{proof}
If follows from the Theorem~\ref{cor: simpleboundedgeodesicR}.
\end{proof}

\begin{lemma}
The equality holds $d_{GH}(\Z_{t_1}, \Z_{t_2}) = |t_1-t_2|$.
\end{lemma}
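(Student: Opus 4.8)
The plan is to reduce the statement to a single sharp computation and then invoke the triangle inequality. Assume, without loss of generality, that $t_1\le t_2$. Both $\Z_{t_1}$ and $\Z_{t_2}$ are subsets of $\R$ with the induced metric, and $d_H(\Z_{t_1},\Z_{t_2})=t_2-t_1$ inside $\R$ (a point of $\Z_{t_2}$ farthest from $\Z_{t_1}$ is an endpoint $n\pm t_2$ of a constituent segment); realizing both spaces in $\R$ gives at once $d_{GH}(\Z_{t_1},\Z_{t_2})\le t_2-t_1$, so only the reverse inequality is at issue.

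For the lower bound I would bring in the auxiliary space $\Z\ss\R$, the integers with the induced metric (which lies in the cloud $[\R]$, since $d_H(\Z,\R)=\tfrac12$), and prove the sharp equality $d_{GH}(\Z_t,\Z)=t$ for all $t\in[\tfrac12-\delta,\tfrac12]$. The bound $\le t$ is immediate from $d_H(\Z_t,\Z)=t$ inside $\R$. Granting the equality, the triangle inequality finishes the proof: $d_{GH}(\Z_{t_1},\Z_{t_2})\ge d_{GH}(\Z_{t_2},\Z)-d_{GH}(\Z_{t_1},\Z)=t_2-t_1$.

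The core of the argument — and the step I expect to be the main obstacle — is the inequality $d_{GH}(\Z_t,\Z)\ge t$, equivalently $\dis R\ge 2t$ for every $R\in\mathcal{R}(\Z_t,\Z)$, which I would establish by a counting/volume estimate rather than by rigidity. Fix $R$ with $\kappa:=\dis R<\infty$ and, for $n\in\Z$, set $R^{-1}(n)=\{x\in\Z_t:(x,n)\in R\}$. By the definition of distortion $\diam R^{-1}(n)\le\kappa$, so $R^{-1}(n)$ lies in an interval of length $\kappa$ and has outer Lebesgue measure at most $\kappa$; since $R$ is a correspondence, these fibres cover $\Z_t$. For $N\in\N$ let $S_N=\{n:R^{-1}(n)\cap[0,N]\neq\0\}$; choosing a point of $[0,N]$ in each of two such fibres and applying $\dis R\le\kappa$ shows that any two elements of $S_N$ differ by at most $N+\kappa$, so $|S_N|\le N+\kappa+1$. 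Hence $\operatorname{meas}\bigl(\Z_t\cap[0,N]\bigr)\le|S_N|\,\kappa\le(N+\kappa+1)\kappa$, whereas $\Z_t\cap[0,N]\supseteq\bigcup_{n=1}^{N-1}[n-t,n+t]$ has measure at least $2t(N-1)$. Letting $N\to\infty$ yields $2t\le\kappa$, as required.

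Two remarks explain why the detour through $\Z$ is essential and why constants must be tracked carefully. If one tried the same fibre-counting directly for a correspondence between $\Z_{t_1}$ and $\Z_{t_2}$, the fibres over the (uncountably many) points of $\Z_{t_2}$ would have to be replaced by preimages of its constituent segments, whose diameters are bounded only by $2t_2+\dis R$, and the resulting estimate $2t_1\le 2t_2+\dis R$ is vacuous for $t_1\le t_2$. And the soft rigidity argument — that a relation on $\R$ of distortion $\kappa$ is $O(\kappa)$-close to a shift — only yields $d_{GH}(\Z_t,\Z)\ge t/2$; it is the exact bookkeeping that $N$ fibres of measure $\le\kappa$ must cover a set of measure $\sim 2tN$ that produces the sharp constant $2t$.
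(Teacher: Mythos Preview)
Your argument is correct and entirely self-contained, whereas the paper's proof is a one-line invocation of an external result: it observes that the family $\Z_t$, $t\in[\tfrac12-\delta,\tfrac12]$, is precisely the canonical Hausdorff geodesic $C_s=B_s(\Z_{1/2-\delta})\cap B_{\delta-s}(\R)$ from Theorem~\ref{thm:HausdorffGeodesic}, and then cites Corollary~\ref{cor:Rgeodesic} (from \cite{IMT}) asserting that for closed subsets of $\R$ this canonical curve is a shortest geodesic in the Gromov--Hausdorff class. Your detour through the integer lattice $\Z$, combined with the measure/counting estimate $2t(N-1)\le(N+\kappa+1)\kappa$ for the fibres of a correspondence, is a genuinely different and more elementary mechanism for the lower bound: it avoids the black-box geodesic result and in fact yields $d_{GH}(\Z_t,\Z)=t$ for all $t\in(0,\tfrac12]$, not just the range near $\tfrac12$. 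The paper's route is shorter on the page but imports a nontrivial theorem; yours trades that for a direct volume argument, and your closing remarks correctly explain why the naive approaches (fibres over segments of $\Z_{t_2}$, or soft rigidity of correspondences on $\R$) would not give the sharp constant.
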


\begin{proof}
It follows from the Corollary~\ref{cor:Rgeodesic}.
\end{proof}

Now note that it suffices to prove the following

\begin{lemma}\label{lemma: innerRmainlemma}
The equality $d_{GH}(\Z_t, \R_d) = \frac{d}{2} + \frac{1}{2} - t$.
\end{lemma}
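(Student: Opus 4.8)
The plan is to establish the equality in Lemma~\ref{lemma: innerRmainlemma} by the usual two-sided estimate, proving $d_{GH}(\Z_t, \R_d) \le \frac{d}{2} + \frac{1}{2} - t$ by exhibiting a correspondence of small distortion, and $d_{GH}(\Z_t, \R_d) \ge \frac{d}{2} + \frac{1}{2} - t$ via the triangle inequality together with the two Lemmas already proved. For the upper bound, note that $\Z_t$ is at Gromov--Hausdorff distance $\frac12 - t$ from $\Z$ (indeed, $\Z_t = B_t(\Z)$ inside $\R$, and by Corollary~\ref{cor:Rgeodesic} the canonical Hausdorff geodesic from $\Z$ to $\R$ passes through $\Z_t$ at parameter proportional to $t$, giving $d_{GH}(\Z_t,\R) = \frac12 - t$ and $d_{GH}(\Z_t,\Z) = \frac12 - (\frac12 - \dots)$; more directly, $d_H(\Z,\Z_t) = t$ so $d_{GH}(\Z_t,\Z)\le t$, but what I actually need is $d_{GH}(\Z_t,\R)\le \frac12 - t$, which is Corollary~\ref{cor:Rgeodesic} evaluated appropriately). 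Then combine with $d_{GH}(\R,\R_d) = \frac{d}{2}$ from the first Lemma and the triangle inequality: $d_{GH}(\Z_t,\R_d)\le d_{GH}(\Z_t,\R) + d_{GH}(\R,\R_d) = \frac12 - t + \frac d2$.

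For the lower bound I would argue in the same spirit: by the triangle inequality, $d_{GH}(\Z_t,\R_d) \ge d_{GH}(\Z_t,\R) - d_{GH}(\R,\R_d) = (\frac12 - t) - \frac d2$, but that gives the wrong sign on $\frac d2$, so instead I would go the other way and use $d_{GH}(\R,\R_d) \le d_{GH}(\R,\Z_t) + d_{GH}(\Z_t,\R_d)$, i.e. $\frac d2 \le (\frac12 - t) + d_{GH}(\Z_t,\R_d)$ — still not enough. The correct route for the lower bound is a direct distortion argument: take any correspondence $R$ between $\Z_t$ and $\R_d$ and show $\dis R \ge d + 1 - 2t$. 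The idea is that $\R_d = \R\times_{\ell^1}(dX)$ is ``fat'' of width $d\cdot\diam X = d$ in the transverse direction and is connected, whereas $\Z_t$ is a disjoint union of segments of length $2t$ with gaps of length $1 - 2t$ between consecutive ones. A point of $\R_d$ lying ``deep'' in a transverse fiber is at distance roughly $d/2$ from the part of $\R_d$ correspondwith any given component of $\Z_t$ near it; pushing this through, one finds two pairs witnessing distortion at least $d + (1 - 2t)$: the $d$ coming from the transverse width of $\R_d$ that $\Z_t$ cannot see, and the $1 - 2t$ coming from the gap in $\Z_t$ that $\R_d$ (being connected) cannot see. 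This is exactly the mechanism already used to prove Lemma~2 (the gap contribution) and Theorem~\ref{cor: simpleboundedgeodesicR} / Lemma~1 (the width contribution); the new point is that these two obstructions are ``orthogonal'' and add.

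Concretely, for the lower bound I would pick a component $[k-t,k+t]$ of $\Z_t$ and the midpoint $m = (k + k+1)/2 = k + \tfrac12$ of the adjacent gap. Under $R$, $m$ corresponds to some $(x_0, y_0)\in\R_d$. I would then locate a point of $\R_d$ of the form $(x_0, y_1)$ with $d_X(y_0,y_1)$ close to $d$ (possible since $\diam X = 1$ and $X$ is path-connected, so all fiber-distances in $[0,d]$ are realized up to $\e$), and a point $(x_1,y_0)$ with $|x_1 - x_0|$ large; tracking their $R$-preimages in $\Z_t$ and using that preimages of nearby $\R_d$-points must be nearby in $\Z_t$ while $\Z_t$ has no points in an open interval of length $1-2t$ around each integer-shifted gap midpoint, I extract the bound. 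I expect the \textbf{main obstacle} to be the bookkeeping in this direct distortion estimate — in particular, making rigorous the claim that a correspondence cannot simultaneously ``hide'' the transverse width $d$ of $\R_d$ and ``hide'' the gap $1-2t$ of $\Z_t$, so that the two deficits genuinely add rather than merely each being a lower bound. A clean way to handle this may be to first reduce to optimal correspondences that are ``$\R$-monotone'' (respect the coarse linear order on both spaces, since both are within finite $d_{GH}$ of $\R$), and then the geometry becomes essentially one-dimensional plus a transverse fiber, at which point the additivity is transparent.
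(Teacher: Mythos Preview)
Your upper bound via the triangle inequality through $\R$ is fine and is exactly what the paper does.

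The lower bound, however, has a genuine gap. First, your ``concrete'' step already breaks: the midpoint $m = k + \tfrac12$ of a gap is \emph{not} a point of $\Z_t$ (since $t<\tfrac12$), so there is no $(x_0,y_0)\in\R_d$ with $(m,(x_0,y_0))\in R$. More importantly, you correctly isolate the real difficulty --- proving that the ``gap'' contribution $1-2t$ and the ``transverse width'' contribution $d$ genuinely \emph{add} in the distortion rather than merely each being a separate lower bound --- but you do not propose a mechanism for this. The hope of reducing to an ``$\R$-monotone'' correspondence is not a plan: there is no reason an optimal $R$ should respect a linear order, and on the $\R_d$ side there is no canonical linear order to respect.

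The paper's route to additivity is quite different from anything in your sketch. One builds a graph $G$ whose vertices are the components $I_n=[n-t,n+t]$, joining $v_n$ to $v_m$ when the images $A_n=R(I_n)$ and $A_m$ touch in $\R_d$. Path-connectedness of $X$ forces $G$ to be connected, and the distortion bound forces the only edges to be between consecutive indices, so $G$ is the path $\cdots-v_{n-1}-v_n-v_{n+1}-\cdots$. A further topological lemma shows that \emph{every} $A_n$ meets \emph{every} horizontal slice $\R\times\{x\}$. This is where the additivity comes from: since $A_n$ has diameter $<d+1-\varepsilon$ but must reach two slices at fibre-distance nearly $d$, the horizontal extent of $A_n$ on any pair of far-apart slices sums to less than $2(1-\varepsilon+\varepsilon')$. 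Covering a long horizontal segment on two such slices by $A_2,\ldots,A_{n-1}$ and comparing with the distance actually needed then gives a contradiction as $n\to\infty$. None of these structural ingredients --- the component graph, its forced path shape, or the ``every $A_n$ meets every slice'' lemma --- appear in your outline, and without them the additivity you need remains an assertion.
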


\begin{proof}

By triangle inequality $$d_{GH}(\Z_t, \R_d)\le d_{GH}(\Z_t, \R) + d_{GH}(\R, \R_d) = \frac{d}{2}+\frac{1}{2}-t.$$ 

Let us prove the opposite inequality. 

Choose $R\in\mathcal{R}(\Z_t, \R_d)$. Suppose that $\dis R < d + 1 - 2t - \varepsilon$ for some~$\varepsilon > 0$. 

Construct a graph~$G$. To a segment $I_n = [n-t, n+t]$ in the space $\Z_t$ we match a vertex~$v_n$. We put $A_n = R(I_n)$. We connect vertices $v_n$ and~$v_m$ with en edge iff $\dist\bigl(A_n, A_m\bigr) = 0$. 

Suppose there exist adjacent $v_n$ and $v_m$ such that $|n-m| > 1$. Choose $x_k\in A_n$, $y_k\in A_m$, $k\in\N$ such that $|x_ky_k|\xrightarrow[k\to\infty]{}0$. Choose arbitrary points $p_k\in R^{-1}(x_k)\cap I_n$ and $q_k\in R^{-1}(y_k)\cap I_m$. Then $$\dis R \ge |p_kq_k| - |x_ky_k|\ge |n-m|-2t-|x_ky_k| \ge 1 + (1-2t)-|x_ky_k| > d + 1-2t,$$ where the last inequality holds for sufficiently large $k$ --- a contradiction.

\begin{lemma}\label{lemma: routeinG}
Graph $G$ is connected. 
\end{lemma}

\begin{proof}
Let $p = (t_1, x_1)\in A_n$, $q =(t_2, x_2)\in A_m$, and $\gamma\:[0,1]\to dX$ is continuous such that $\gamma(0) = x_1$, $\gamma(1) = x_2$. Then a curve $w(t) = \bigl(t\cdot t_1 + (1-t)\cdot t_2,\,\gamma(t)\bigr)\in \R_d$ is continuous, such that $w(0) = p$, $w(1) = q$. 

We put $W := w\bigl([0, 1]\bigr)\subset\R_d$. Since $w(t)$ is continuous, $\diam W < \infty$. Since $\dis R < \infty$, a subset $R^{-1}(W)$ is bounded. Hence, $R^{-1}(W)$ is covered by a finite union of segments $I_n$: $R^{-1}(W)\subset \cup_{i=1}^kI_{n_i}$. In particular, $W\subset \cup_{i=1}^k A_{n_i}$. Without loss of generality suppose that $A_{n_1} = A_n$, $A_{n_k} = A_m$.

Now we construct a path between $v_n$ and $v_m$ in~$G$. Put $t_1= \sup\bigl\{t\in[0,1]\: w(t)\in A_n\bigr\}$. If $t = 1$, then by construction of $G$ vertices $v_n$ and $v_m$ are adjacent. Suppose that $t < 1$. Since $w$ is continuous, $w(t+\frac{1}{n})\xrightarrow[n\to\infty]{} w(t)$ ($n$ is chosen in such way that $t+\frac{1}{n} < 1)$. However, each of the points $w(t+\frac{1}{n})$ belongs to $\cup_{i=2}^{k} A_{n_i}$. Thus, there exists a sequence $(a_n)_{n\in\N}$, $a_n\in\N$, tending to infinity, such that $w(t+\frac{1}{a_n})\in A_{n_j}$ for some~$j>1$. Hence, $v_n$ and $v_{n_j}$ are adjacent in~$G$, and $\sup\{t\in [0,1]\: w(t)\in A_{n_j}\} > \sup\{t\in[0, 1]\: w(t)\in A_n\}$. Since a collection $\{A_{n_i}\}_{i=1,\ldots,k}$ is finite, and the value $\sup\{t\in[0, 1]\: t\in A_n\}$ increases when we go from $A_n$ to $A_{n_j}$, by removing $A_n$ from $\{A_{n_i}\}_{i=1,\ldots,k}$ and repeating the same argument, we will construct a path from $v_n$ to $v_m$ in $G$ in a finite number of steps. 
\end{proof}

Since $v_n$ and $v_m$ are not adjacent in $G$ when $|n-m| > 1$, we conclude from Lemma~\ref{lemma: routeinG} that 
$v_n$ and $v_m$ are adjacent iff~$|n-m| = 1$.

\begin{lemma}\label{lemma: topologylemma}
For arbitrary $n\in\Z$ and $x\in (dX)$, we have $A_n\cap \bigl(\R\times\{x\}\bigr)\neq\emptyset$.
\end{lemma}

\begin{proof}
Arguing by contradiction, suppose there exist $x\in dX$ and $n\in\Z$ such that $A_n\cap \bigl(\R\times\{x\}\bigr)=\emptyset$. Fix $k = n-5 < n < l = n+5$. Choose arbitrary $(t', x')\in A_k$, $(t'', x'')\in A_l$. Choose continuous $\gamma, \gamma' \:[0,1]\to dX$ such that $\gamma(0) = x'$, $\gamma(1) = x$ and $\gamma'(0) = x$, $\gamma'(1) = x''$. Consider a gluing of three curves: $w_1(t) = \bigl(t', \gamma(t)\bigr)$, $t\in[0, 1]$, $w_2(\lambda) = \bigl(t'\cdot (1-\lambda) + t''\cdot \lambda,\, x\bigr)$, $\lambda\in [0, 1]$ и $w_3(s) = \bigl(t'', \gamma'(s)\bigr)$, $s\in[0,1]$. Since the union of the images of $w_1, w_2$, and $w_3$ is compact (denote it by~$K$), and $\dis R < \infty$, we conclude that preimage $R^{-1}(K)$ is bounded. Hence, $R^{-1}(K)$ has non-empty intersections only with a finite number of segments~$I_k$: $R^{-1}(K)\subset \cup_{i = 1}^m I_{n_i}$. In other words, $K\subset \cup_{i = 1}^m A_{n_i}$. Similarly to Lemma~\ref{lemma: routeinG} from this covering we can construct a path in~$G$ from $v_k$ to~$v_l$. However, note that $R^{-1}(K)\cap I_n=\emptyset$. Indeed, $R^{-1}\Bigl(w_2\bigl([0, 1]\bigr)\Bigr)\cap I_n = \emptyset$ by the condition of Lemma~\ref{lemma: topologylemma}. While for arbitrary $p\in w_1\bigl([0,1]\bigr)$ and $q\in R(I_n)$ the inequalities hold 
\begin{multline*}
|pq|\ge |w_1(0)q| - |w_1(0)p| \ge \dist(I_k, I_n)-\dis R - \diam A_k > 5 - 2\dis R - \diam I_k > 5 - 2\cdot 2\delta - 2 \ge 1.
\end{multline*}
Thus, $w_1\bigl([0, 1]\bigr)\cap R(I_n) = \emptyset$. Similarly, $w_2\bigl([0, 1]\bigr)\cap R(I_n) = \emptyset$. Hence, we have constructed a path in~$G$ between $v_k$ and~$v_l$ that does not go through~$v_n$ --- a contradiction.  
\end{proof}

Now we return to the proof of Lemma~\ref{lemma: innerRmainlemma}.

Since $\dis R < 1-2t + d - \varepsilon$ and $\diam I_n = 2t$, the inequality holds $\diam A_n < 1-2t + d - \varepsilon + 2t = d+1-\varepsilon$. 

Since $\diam X = 1$, 
% а $X$ компактно, 
for arbitrary $\varepsilon' > 0$ there exist $x,x'\in dX$ such that $|xx'| > d - \varepsilon'$. Fix $0 <\varepsilon' < \varepsilon$.

We put $A = \bigl(\inf\{t: (t, x)\in A_n\}, x\bigr)$ , $B = \bigl(\sup\{t: (t, x)\in A_n\}, x\bigr)$, $C = \bigl(\inf\{t: (t, x')\in A_n\}, x'\bigr)$, $D = \bigl(\sup\{t: (t, x')\in A_n\}, x'\bigr)$. By $x_A$, $x_B$, $x_C$, and $x_D$ denote the coordinates of projections of $A$, $B$, $C$, and~$D$, respectively, on a factor~$\R$ in~$\R_d$. Since $|AC| = |xx'| + |x_A - x_C|$, we have $$|x_A - x_C| = |AC| - |xx'|\le \diam A_n - |xx'|< d + 1 - \varepsilon - d + \varepsilon'= 1 -\varepsilon+\varepsilon'.$$ Similarly, we obtain that each of the values $|x_B-x_D|$, $|x_A-x_D|$, $|x_B-x_C|$ does not exceed~$(1 -\varepsilon+\varepsilon')$.

Without loss of generality suppose that $x_A\ge x_C$. Consider two cases. 

1) If $x_A \le x_D$, then $x_B - x_C < 1-\varepsilon + \varepsilon'$, $x_D-x_A < 1-\varepsilon+\varepsilon'$. Thus, $$|AB|+|CD| = x_B-x_C + x_D-x_A < 2-2\varepsilon+2\varepsilon'.$$ 

2) If $x_A > x_D$, then $|AB|+|CD| \le x_B-x_C < 1-\varepsilon+\varepsilon'$. 

In both cases $|AB|+|CD| < 2(1-\varepsilon+\varepsilon')$.

Without loss of generality suppose that a projection of $A_1$ on the factor~$\R$ in~$\R_d$ has smaller coordinates than a projection of~$A_n$ on the same factor.

Put $P_2 = \bigl(\sup\bigl\{t\: (t,x)\in A_1\bigr\}, x\bigr)$, $P_1 = \bigl(\sup\bigl\{t\: (t,x')\in A_1\bigr\}, x'\bigr)$, $Q_2 = \bigl(\inf\bigl\{t\: (t,x)\in A_n\bigr\}, x\bigr)$, $Q_1 = \bigl(\inf\bigl\{t\: (t,x')\in A_n\bigr\}, x'\bigr)$. 

On the one hand, since $\sup\bigl\{|x-y|\: x\in A_1,\,y\in A_n\bigr\}  = n-1+2t$, the inequality holds
\begin{align*}
\bigl||P_2Q_2| + |P_1Q_1| - 2n| \le \bigl||P_2Q_2|-n\bigr|+\bigl||P_1Q_1|-n\bigr|  \le 2\dis R + 4t+2.
\end{align*}
Therefore, 
\begin{align}\label{ineq1}
|P_2Q_2| + |P_1Q_1| \ge  2n - 2\dis R - 4t -2.
\end{align}

On the other hand, note that both segments $P_1Q_1$ and~$P_2Q_2$ lie in a union $\cup_{k=2}^{n-1} A_k$ (perhaps, excluding the points $P_1$, $Q_1$, $P_2$, $Q_2$).  

Indeed, otherwise, suppose that $P_1Q_1$ contains a point from $A_l$ such that $l < 1$. Then, similarly to Lemma~\ref{lemma: routeinG}, from the passing from $A_l$ to~$Q_1$ along s segment~$P_1Q_1$ we can construct a path in $G$ between $v_l$ and~$v_n$ that does not go through~$v_1$ --- a contradiction. 

Thus, the length of a segment~$P_1Q_1$ can be estimated from above by a sum $\sum_{t = 2}^{n-1} |L_tR_t|$, where $L_t = \bigl(\inf\{t\: (t, x)\in A_t\}, x\bigl)$, $R_t = \bigl(\sup\{t\: (t, x)\in A_t\}, x\bigl)$. Analogously, $|P_2Q_2|\le \sum_{t = 2}^{n-1} |L'_tR'_t|$, where $L'_t = \bigl(\inf\{t\: (t, x')\in A_t\}, x'\bigl)$, $R'_t = \bigl(\sup\{t\: (t, x')\in A_t\}, x'\bigl)$. Earlier, we have proved that for each $t = 2, \ldots n-1$ the inequality holds $|L_tR_t| + |L'_tR'_t|< 2-2\varepsilon + 2\varepsilon'$. Hence,
\begin{align}\label{ineq2}
|P_2Q_2| + |P_1Q_1|\le \sum_{t=2}^{n-1}\bigl(|L_tR_t| + |L'_tR'_t|\bigr)< 2(1-\varepsilon+\varepsilon')(n-2).
\end{align}

From Inequalities~(\ref{ineq1}) and~(\ref{ineq2}), it follows that $$2n - 2\dis R - 4t < 2(1-\varepsilon+\varepsilon')(n-2).$$
Dividing by~$n$, and tending~$n$ to infinity, we obtain $$1 \le 1-\varepsilon+\varepsilon' < 1,$$ a contradiction.
\end{proof}

%%%%%%%%%%%%%%%%%%%%%%%%%%%%%%%%%%%%

\end{document}